\newcommand{\address}[1]{\gdef\@address{#1}}
\newcommand{\email}[1]{\gdef\@email{\url{#1}}}
\newcommand{\@endstuff}{\par\vspace{\baselineskip}\noindent\small
\begin{tabular}{@{}l}\@address\\\textit{E-mail address:} \@email\end{tabular}}
\newtheorem{theorem}{Theorem}[section]
\newtheorem{definition}[theorem]{Definition}
\newtheorem{example}[theorem]{Example}
\newtheorem{lemma}[theorem]{Lemma}
\newtheorem{question}[theorem]{Question}
\newtheorem{remark}[theorem]{Remark}
\newtheorem{proposition}[theorem]{Proposition}
\newtheorem{corollary}[theorem]{Corollary}
\newtheorem{assumption}[theorem]{Assumption}
\newcommand{\citep}[1]{\cite{#1}}
\newcommand{\professor}{\text{Prof.\ }\hspace{-0.03125mm}}
\newcommand{\doctor}{\text{Dr.\ }\hspace{-0.03125mm}}
\newcommand{\edmp}{\text{\normalfont End}}
\newcommand{\tracebold}{\text{\normalfont\textbf{Tr}}}
\newcommand{\supertrace}{\text{\normalfont Str}}
\newcommand{\supertracebold}{\text{\normalfont\textbf{Str}}}
\begin{document}
\title{\textbf{Transgression in the primitive cohomology}}
\author{Hao Zhuang}
\address{Beijing International Center for Mathematical Research, Peking University}
\email{hzhuang@pku.edu.cn}
\date{December 31, 2025}
\maketitle
\begin{abstract}
We study the Chern-Weil theory for the primitive cohomology of a symplectic manifold. First, given a symplectic manifold, we review the superbundle-valued forms on this manifold and prove a primitive version of the Bianchi identity. Second, as the main result, we prove a transgression formula associated with the boundary map of the primitive cohomology. Third, as an application of the main result, we introduce the concept of primitive characteristic classes and point out a further direction. 
\end{abstract} 
\tableofcontents
\section{Introduction}
The primitive cohomology of a symplectic manifold draws much attention in the past several years. It was first introduced by Tseng and Yau in \cite[(3.14), (3.22)]{tty1st} and \cite[19, (1.5), (1.6)]{tty2nd}. Later, Tsai, Tseng, and Yau constructed the more general $p$-filtered cohomology \cite[(1.2), Theorem 3.1]{tty3rd} and included the primitive cohomology as the $0$-filtered part. The constructions in \cite{tty3rd, tty1st, tty2nd} use the Lefschetz decomposition. An equivalent construction using the mapping cone was given by Tanaka and Tseng \cite[Theorem 1.1]{tanaka_tseng_2018}. 

Different from the de Rham cohomology, the most important point of the primitive cohomology is that it relies on the symplectic form. Thus, by \cite[Section 4]{tty2nd} and \cite[Section 6.3]{tty3rd}, the primitive cohomology can be used to distinguish between different symplectic structures. 

Meanwhile, many topics in geometry and topology naturally extends to the primitive cohomology. In these primitive versions, the symplectic form reveals extra features in both the procedures and the results. For instance, in terms of differential topology, Clausen, Tang, and Tseng developed the symplectic Morse theory \cite[Theorems 1.3-1.4]{tangtsengclausensymplecticwitten} and \cite[Theorems 1.2-1.3]{tangtsengclausenmappingcone}, connecting 
the primitive cohomology to the critical points of Morse functions. In our previous work, we introduced the primitive and the $1$-filtered versions of semi-characteristics \cite[Theorem 1.5]{symplectic_semi_char_2025} and \cite[Theorem 1.2]{1_filtered_semi_char_2025}, relating to zero points of vector fields.

Besides the perspective of differential topology, there are also studies on the primitive cohomology from the perspective of connections and characteristic classes. In \cite[Definition 1.1]{tseng_and_zhou_symplectic_flat_connection_and_twisted_primitive2022}, for connections on a vector bundle over the symplectic manifold, Tseng and Zhou introduced the concept of symplectic flatness. In \cite[Section 4]{tseng_zhou_symplectic_flat_functional_characteristic_classes2022}, they equipped a $G$-bundle with a symplectically flat connection and studied the characteristic classes of the $G$-bundle. More importantly, in \cite[Definitions 1.3-1.4]{tseng_zhou_symplectic_flat_functional_characteristic_classes2022} and \cite[(1.1)]{tseng_and_zhou_2025mapping_yang_mills}, they introduced the primitive Yang-Mills functionals and proved the associated classification theorems \cite[Theorem 1.2]{tseng_zhou_symplectic_flat_functional_characteristic_classes2022} and \cite[Theorem 1.1]{tseng_and_zhou_2025mapping_yang_mills} of symplectically flat $G$-bundles over the symplectic manifold. In addition, as they pointed out, the results in \cite{tseng_zhou_symplectic_flat_functional_characteristic_classes2022, tseng_and_zhou_2025mapping_yang_mills} are true even when the symplectic form is replaced by any nondegenerate closed $2$-form.

Tseng and Zhou's settings of the symplectic flatness involves a connection on the bundle and a smooth section of the associated endomorphism bundle. Motivated by their construction of the primitive version of the Yang-Mills functional, we believe that even when the connection is not symplectically flat, it is still worthwhile to put the connection and the smooth section together as a ``primitive connection''. Afterwards, this ``primitive connection'' induces other geometric or topological objects. 

In this paper, we develop the Chern-Weil theory using this ``primitive connection'' for the primitive cohomology. Once we find the primitive version of the Chern-Weil transgression formula associated with the boundary map of the primitive cohomology, we can immediately define the primitive versions of characteristic classes involving the symplectic form. To make things more unified, we present our results in the language of superbundles \cite[Section 2]{quillen_superconnection}.

We adopt the construction \cite[Theorem 1.1]{tanaka_tseng_2018} of the primitive cohomology. For superbundles and the transgression, we follow the approaches similar to \cite[Sections 1.3-1.5]{bgv}. 

\begin{assumption}\normalfont
   We let $(M,\omega)$ be a symplectic manifold, and $E = E^+\oplus E^-$ be a $\mathbb{Z}_2$-graded smooth superbundle \cite[Definition 1.29]{bgv} over $M$.
\end{assumption}

Let $\Omega^i(M,E)$ (resp. $\Omega(M,E)$) be the space of smooth $E$-valued $i$-forms (resp. all smooth $E$-valued forms) on $M$. Following \cite[Section 3.1]{tanaka_tseng_2018}, we give the map
    \begin{align*}
        \partial: \Omega(M)\oplus\Omega(M)&\to\Omega(M)\oplus\Omega(M)\\
        (\alpha,\beta)&\mapsto (d\alpha+\omega\wedge\beta, -d\beta).
    \end{align*}
It defines a chain complex 
\begin{align}\label{chain complex primitive}
    \partial: \Omega^i(M)\oplus\Omega^{i-1}(M)\to\Omega^{i+1}(M)\oplus\Omega^i(M) \ \ (0\leqslant i\leqslant \dim M+1).
\end{align}
\begin{definition}\label{definition of primitive cohomology}\normalfont
    The cohomology given by (\ref{chain complex primitive})
is called the primitive cohomology of $(M,\omega)$.
\end{definition}

We follow the conventions of signs in \cite[Sections 1.3-1.5]{bgv} and \cite[Sections 1.2-1.3]{weipingzhangnewedition} and then obtain 
$\Omega^\pm(M,E)$ and $\Omega^\pm(M,\edmp(E))$.  

Following  \cite[(1.6)]{tseng_and_zhou_symplectic_flat_connection_and_twisted_primitive2022}, \cite[(1.1)]{tseng_zhou_symplectic_flat_functional_characteristic_classes2022}, and \cite[(1.1)]{tseng_and_zhou_2025mapping_yang_mills}, we define the primitive superconnection. In fact, the primitive superconnection is just a map reversing the parity of the total degree, but it is not a true connection. See Remark \ref{not a connection remark} for explanations. 
\begin{definition}\label{extended superconnection}
    \normalfont We call the linear map
\begin{align*}
    \mathbb{A}: \Omega(M,E)\oplus\Omega(M,E)&\to \Omega(M,E)\oplus\Omega(M,E)\\
       (\alpha, \beta) &\mapsto (A\alpha+\omega\wedge\beta, B\alpha - A\beta)
\end{align*}
a primitive superconnection when $A$ is a superconnection on $E$ and $B\in\Omega^+(M,\edmp(E))$.
\end{definition}
\begin{remark}\normalfont
   The superconnection $A$ on $E$ is symplectically flat if and only if the map $\mathbb{A}^2 = 0$ (cf. \cite[Proposition 3.8]{tseng_and_zhou_symplectic_flat_connection_and_twisted_primitive2022}). See (\ref{A^2 computation}) for the expression of $\mathbb{A}^2$ in our notations.
\end{remark}

We know \cite[Sections 1.3 \& 1.5]{bgv} that for any $\gamma\in\Omega(M,\edmp(E))$, it has a supertrace $\supertrace(\gamma)\in\Omega(M)$. We extend the definition of supertraces as follows.
\begin{definition}\label{supertracebold definition}\normalfont
    For any $$(\gamma,\delta)\in\Omega(M,\edmp(E))\oplus\Omega(M,\edmp(E)),$$ we call 
    $$(\supertrace(\gamma),\supertrace(\delta))\in\Omega(M)\oplus\Omega(M)$$
    the supertrace of $(\gamma,\delta)$ and denote it by $\supertracebold(\gamma,\delta)$. 
\end{definition}

Now, we give the main result, including a $\partial$-closed element and a transgression formula.
\begin{theorem}\label{main result}
    For a smooth family of primitive superconnections $\mathbb{A}_t$ $(t\in\mathbb{R})$ and any $k\in\mathbb{N}$, we identify maps $\mathbb{A}_t^{2k}$ and $\dfrac{d\mathbb{A}_t}{dt}\mathbb{A}_t^{2k-2}$ with unique elements 
$$\mathbb{A}_t^{2k}\in\Omega^{+}(M,\edmp(E))\oplus\Omega^{-}(M,\edmp(E))$$
and $$\dfrac{d\mathbb{A}_t}{dt}\mathbb{A}_t^{2k-2}\in\Omega^{-}(M,\edmp(E))\oplus\Omega^{+}(M,\edmp(E))$$ respectively according to {\normalfont (\ref{A2k expression})} and {\normalfont (\ref{dA_tA_t^{2k} expression})}. Then, for any polynomial $f\in\mathbb{C}[z]$, we have 
    \begin{enumerate}[label = {\normalfont(\arabic*)}]
        \item $\partial\hspace{+0.5mm}\supertracebold(f(\mathbb{A}_t^2)) = (0,0)$,  
        \item $\dfrac{d}{dt}\supertracebold(f(\mathbb{A}_t^2)) = \partial\hspace{0.5mm}\supertracebold\left(\dfrac{d\mathbb{A}_t}{dt}f'(\mathbb{A}_t^2)\right)$.
    \end{enumerate}
\end{theorem}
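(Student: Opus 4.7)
The plan is to reduce both claims to the monomial case $f(z)=z^{k}$ by linearity, and then verify each identity via direct block computation combined with a primitive Bianchi identity obtained from $[\mathbb{A},\mathbb{A}^{2}]=0$.

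First I will write $\mathbb{A}$ as a block-matrix operator $\bigl(\begin{smallmatrix}A&\omega\wedge\\B&-A\end{smallmatrix}\bigr)$ on $\Omega(M,E)\oplus\Omega(M,E)$. Using that $A$ is a superconnection and $\omega$ is closed and even, a direct calculation yields
\[
\mathbb{A}^{2}=\begin{pmatrix}R & 0\\ C & R\end{pmatrix},\qquad R=A^{2}+\omega\wedge B,\quad C=[B,A],
\]
and by induction
\[
\mathbb{A}^{2k}=\begin{pmatrix}R^{k} & 0\\ S_{k} & R^{k}\end{pmatrix},\qquad S_{k}=\sum_{j=0}^{k-1}R^{k-1-j}\,C\,R^{j},
\]
so that the identification of $\mathbb{A}^{2k}$ in the theorem is $(R^{k},S_{k})\in\Omega^{+}(M,\edmp(E))\oplus\Omega^{-}(M,\edmp(E))$. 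Equating the four blocks of $\mathbb{A}\mathbb{A}^{2}=\mathbb{A}^{2}\mathbb{A}$ yields two primitive Bianchi relations,
\[
[A,R]=-\omega\wedge C,\qquad [A,C]=[B,R],
\]
which drive everything that follows.

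For part (1), it suffices by linearity to handle $f(z)=z^{k}$, which demands $d\supertrace(R^{k})+\omega\wedge\supertrace(S_{k})=0$ and $d\supertrace(S_{k})=0$. The first follows from $d\supertrace(\cdot)=\supertrace([A,\cdot\,])$: expanding $[A,R^{k}]=\sum_{j=0}^{k-1}R^{j}[A,R]R^{k-1-j}$, substituting $[A,R]=-\omega\wedge C$, and using cyclicity (which is clean because $R$ and $\omega$ are even and central) collapses the sum to $-\omega\wedge k\supertrace(R^{k-1}C)=-\omega\wedge\supertrace(S_{k})$. The second reduces to the combinatorial lemma $[A,S_{k}]=[B,R^{k}]$, after which $\supertrace([B,R^{k}])=0$ is immediate because $B$ and $R^{k}$ are both even. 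I plan to prove the lemma by expanding $[A,S_{k}]=\sum_{j}\bigl\{[A,R^{k-1-j}]CR^{j}+R^{k-1-j}[A,C]R^{j}-R^{k-1-j}C[A,R^{j}]\bigr\}$: the middle pieces telescope into $[B,R^{k}]$ via $\sum_{j}R^{k-1-j}[B,R]R^{j}=BR^{k}-R^{k}B$, while the first and third families both reduce to sums of $\omega\wedge R^{a}CR^{b}CR^{c}$ indexed by nonnegative integers with $a+b+c=k-2$ and cancel term-by-term after a bijective relabeling.

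For part (2) with $f(z)=z^{k}$, the RHS becomes $k\,\partial\supertracebold(\dot{\mathbb{A}}_{t}\mathbb{A}_{t}^{2k-2})$, and the block computation
\[
\dot{\mathbb{A}}_{t}\mathbb{A}_{t}^{2k-2}=\begin{pmatrix}\dot A_{t}R_{t}^{k-1}&0\\ \dot B_{t}R_{t}^{k-1}-\dot A_{t}S_{k-1,t}&-\dot A_{t}R_{t}^{k-1}\end{pmatrix}
\]
supplies the identification $(\dot A_{t}R_{t}^{k-1},\,\dot B_{t}R_{t}^{k-1}-\dot A_{t}S_{k-1,t})\in\Omega^{-}(M,\edmp(E))\oplus\Omega^{+}(M,\edmp(E))$. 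The first-component identity is verified by computing $\tfrac{d}{dt}\supertrace(R_{t}^{k})=k\supertrace(\dot R_{t}R_{t}^{k-1})$ with $\dot R_{t}=[A_{t},\dot A_{t}]+\omega\wedge\dot B_{t}$, and then using the relation $d\supertrace(\dot A_{t}R_{t}^{k-1})=\supertrace([A_{t},\dot A_{t}]R_{t}^{k-1})+\omega\wedge\supertrace(\dot A_{t}S_{k-1,t})$ (derived from $[A,R^{k-1}]=-\omega\wedge S_{k-1}$) to rearrange into the desired $k\{d\supertrace(\dot A_{t}R_{t}^{k-1})+\omega\wedge\supertrace(\dot B_{t}R_{t}^{k-1}-\dot A_{t}S_{k-1,t})\}$. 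The second-component identity is verified by reducing $\supertrace(S_{k,t})$ to $k\supertrace(R_{t}^{k-1}C_{t})$ by cyclicity, differentiating, substituting $\dot C_{t}=[B_{t},\dot A_{t}]-[A_{t},\dot B_{t}]$, and matching the expansion of $-k\,d\supertrace(\dot B_{t}R_{t}^{k-1}-\dot A_{t}S_{k-1,t})$ term-by-term using cyclicity together with the already-proved identity $[A,S_{k-1}]=[B,R^{k-1}]$.

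The main obstacle I expect is the combinatorial cancellation of the $\omega\wedge R^{a}CR^{b}CR^{c}$ families inside the proof of $[A,S_{k}]=[B,R^{k}]$; once that lemma is in hand, both parts reduce to clean but lengthy manipulations of cyclicity together with the primitive Bianchi relations.
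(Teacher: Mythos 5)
Your proposal is correct and takes essentially the same route as the paper: you identify $\mathbb{A}_t^{2k}$ with the pair $(R^k,S_k)$, where $R=A^2+\omega\wedge B$ and $S_k=\sum_i R^i(BA-AB)R^{k-1-i}$, you prove exactly the primitive Bianchi relations \eqref{induction assumption first}--\eqref{induction assumption second} (your $[A,R^k]=-\omega\wedge S_k$ and $[A,S_k]=[B,R^k]$ in graded-commutator notation), and you then deduce both parts from $d\,\supertrace(\cdot)=\supertrace([A,\cdot])$ together with cyclicity of the supertrace, which is precisely the content of Lemma \ref{boundary maps turns into bracket with primitive superconnection}, Proposition \ref{bianchi identity primitive version}, and Section \ref{section transgression formula}. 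The only real difference is cosmetic: you derive the degree-two relations from $\mathbb{A}\mathbb{A}^2=\mathbb{A}^2\mathbb{A}$ and then get the general-$k$ Bianchi identities by direct Leibniz expansion with a termwise cancellation, whereas the paper argues by induction on $k$; your sketched cancellations, including the term-matching in part (2), do go through with the ingredients you list.
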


The next corollary is because the space of superconnections is affine \cite[Corollary 1.40]{bgv}.
\begin{corollary}
      For any $f\in\mathbb{C}[z]$ and any two primitive superconnections $\mathbb{A}_0$ and $\mathbb{A}_1$, let $\mathbb{A}_t = t\mathbb{A}_1 + (1-t)\mathbb{A}_0$, then we have
      $$\supertracebold(f(\mathbb{A}_1^2)) - \supertracebold(f(\mathbb{A}_0^2)) = \partial\int_0^1\supertracebold\left((\mathbb{A}_1-\mathbb{A}_0)f'(\mathbb{A}_t^2)\right)dt.$$
      This means that the primitive cohomology class of $\supertracebold(f(\mathbb{A}^2))$ is independent of $\mathbb{A}$.
\end{corollary}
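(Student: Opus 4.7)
The plan is to adapt the classical Chern--Weil transgression argument of \cite[Section 1.5]{bgv} to the primitive setting, relying on three ingredients: a primitive Bianchi identity, a primitive analogue of the fundamental identity $d\,\supertrace(\gamma) = \supertrace([A,\gamma])$, and cyclicity of $\supertracebold$. Throughout I would view $\mathbb{A}_t$ as an operator on $\Omega(M,E)\oplus\Omega(M,E)$ with matrix
$$\mathbb{A}_t = \begin{pmatrix} A_t & \omega \\ B_t & -A_t \end{pmatrix}.$$
Since $d\omega=0$ forces $A_t\omega = \omega A_t$ and $B_t\omega = \omega B_t$ as operators, a direct computation shows that $\mathbb{A}_t^2$ is lower-triangular with equal diagonal entries $A_t^2+\omega B_t$ and off-diagonal $[B_t,A_t]$. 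This justifies the pair identification used in (\ref{A2k expression}), and a similar analysis handles (\ref{dA_tA_t^{2k} expression}).

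The primitive Bianchi identity $[\mathbb{A}_t,\mathbb{A}_t^2]=0$ is formal: since $\mathbb{A}_t$ is odd, the supercommutator equals $\mathbb{A}_t^3-\mathbb{A}_t^3=0$. The substantive step is the primitive transgression lemma
$$\partial\,\supertracebold(\Gamma) = \supertracebold([\mathbb{A}_t,\Gamma]),$$
valid for any operator $\Gamma$ of the triangular form identified with a pair $(\gamma,\delta)\in\Omega(M,\edmp(E))\oplus\Omega(M,\edmp(E))$. I would prove this by a direct matrix-entry calculation: the top-right entries of $[\mathbb{A}_t,\Gamma]$ cancel because $\omega$ commutes with $\gamma$, and the surviving diagonal and off-diagonal entries reduce, after applying $\supertrace$ componentwise, to $d\supertrace(\gamma)+\omega\supertrace(\delta)$ and $-d\supertrace(\delta)$ respectively. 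This uses the classical identity $\supertrace([A_t,\cdot])=d\supertrace(\cdot)$ and the vanishing of $\supertrace$ on supercommutators involving the endomorphism-valued $B_t$. The result matches $\partial(\supertrace(\gamma),\supertrace(\delta))$ term by term, and one can check both parities of $\Gamma$ by the same template.

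With these pieces in hand, both halves of the theorem follow. For (1), since $[\mathbb{A}_t,\cdot]$ is a graded derivation, primitive Bianchi gives $[\mathbb{A}_t,\mathbb{A}_t^{2k}]=0$, and the transgression lemma then yields $\partial\,\supertracebold(\mathbb{A}_t^{2k})=0$; linearity in $f$ handles arbitrary polynomials. For (2), set $F_t := \mathbb{A}_t^2$ and observe $\dot F_t = [\mathbb{A}_t,\dot{\mathbb{A}}_t]$. Differentiating $F_t^k$ by Leibniz, and using Bianchi plus the derivation property to pull the outer $\mathbb{A}_t$ out, produces
$$\frac{d}{dt}F_t^k \;=\; \sum_{j=0}^{k-1}\bigl[\mathbb{A}_t,\; F_t^{j}\dot{\mathbb{A}}_tF_t^{k-1-j}\bigr].$$
Applying $\supertracebold$ together with the primitive transgression lemma converts the right-hand side into $\partial$ of a sum. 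Then cyclicity of $\supertracebold$ (which reduces to componentwise cyclicity of $\supertrace$ and carries trivial signs because $F_t$ is even) collapses that sum to $k\,\supertracebold(\dot{\mathbb{A}}_tF_t^{k-1}) = \supertracebold(\dot{\mathbb{A}}_t f'(F_t))$ for $f(z)=z^k$, and linearity extends the identity to every polynomial.

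The main obstacle is the primitive transgression lemma. One must align the $\omega$-shift in $\mathbb{A}_t$ with the $\omega$-shift in $\partial$, check that the $B_t$-contributions drop out under $\supertrace$, and verify the sign bookkeeping for both parities of $\Gamma$. Once this lemma is established, the rest of the argument is a transparent adaptation of the classical Chern--Weil transgression.
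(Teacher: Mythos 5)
Your argument is essentially correct, but it reaches Theorem \ref{main result} by a genuinely different route than the paper, and it stops one (easy) step short of the corollary itself. The paper never works at the operator level: it proves the primitive Bianchi identity (Proposition \ref{bianchi identity primitive version}) by induction on $k$ directly on the component formula (\ref{A2k expression}), then proves item (2) by a long chain of componentwise supertrace manipulations that reuse the induction identities (\ref{induction assumption first})--(\ref{induction assumption second}); the corollary is then obtained in one line by taking the affine path and integrating. You instead encode each identified pair $(\gamma,\delta)$ as the lower-triangular operator $(\alpha,\beta)\mapsto(\gamma\alpha,\ \delta\alpha+\epsilon\gamma\epsilon\,\beta)$ on $\Omega(M,E)\oplus\Omega(M,E)$ (with $\epsilon$ the parity involution), observe that $\mathbb{A}^2$ is of this form with equal diagonal blocks $A^2+\omega\wedge B$ (correct: $d\omega=0$ and the evenness of $\omega$ make $\omega\wedge$ commute with $A$ and with every element of $\Omega(M,\edmp(E))$), and that the paper's bracket (\ref{definition of commutator for pairs}) is the honest operator supercommutator restricted to such triangular operators. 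Granting that compatibility --- which requires checking not only that the top-right entry of $\left\llbracket\mathbb{A},\Gamma\right\rrbracket$ vanishes but also that its bottom-right entry is the parity twist of its top-left entry, in both parities of $\Gamma$ --- the Bianchi identity becomes the tautology $\mathbb{A}^{2k+1}-\mathbb{A}^{2k+1}=0$, and item (2) follows from Leibniz plus Bianchi plus cyclicity, exactly as in the classical argument. This buys a much shorter, more conceptual proof at the cost of the algebraic setup; note, though, that your ``cyclicity of $\supertracebold$'' is not literally componentwise: the second component of a product of pairs mixes $\delta_1\gamma_2$ with $\epsilon\gamma_1\epsilon\,\delta_2$, so the vanishing of $\supertracebold$ on the relevant commutators is a small mixed-component supertrace check. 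It does go through, because in your application every factor is tensorial (an element of $\Omega(M,\edmp(E))$, not a differential operator) and $\mathbb{A}_t^2$ has even first and odd second component, so the ordinary cyclicity of $\supertrace$ applies with harmless signs; but this bookkeeping, which you only flag, is precisely where the paper spends its effort in explicit form.

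What is missing for the corollary as stated: you never specialize to the path $\mathbb{A}_t=t\mathbb{A}_1+(1-t)\mathbb{A}_0$. One must note that this is a smooth family of primitive superconnections --- because $tA_1+(1-t)A_0$ is again a superconnection and $tB_1+(1-t)B_0\in\Omega^+(M,\edmp(E))$, which is the affineness the paper invokes --- that $\dfrac{d\mathbb{A}_t}{dt}=\mathbb{A}_1-\mathbb{A}_0$, and then integrate item (2) over $t\in[0,1]$, interchanging $\partial$ with $\int_0^1 dt$, to obtain the displayed transgression formula; the independence of the primitive cohomology class additionally uses item (1) so that both $\supertracebold(f(\mathbb{A}_0^2))$ and $\supertracebold(f(\mathbb{A}_1^2))$ are $\partial$-closed. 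These steps are routine, but they are the actual content of the corollary and should be stated.
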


    It could be very tempting to apply the classical transgression formula \cite[Proposition 1.41]{bgv} directly to each component of  
    $$\mathbb{A}_t^{2k}\in\Omega^{+}(M,\edmp(E))\oplus\Omega^{-}(M,\edmp(E))$$
    to verify Theorem \ref{main result}. 
    Unfortunately, because of the element $B\in\Omega^+(M,\edmp(E))$, neither of the components of $\mathbb{A}^{2k}$ is equal to the $2k$-th power of a superconnection. Thus, we cannot directly apply the classical transgression formula to each component.
    
    A simpler situation is when the de Rham cohomology class of the symplectic form $\omega$ is integral. In this situation, by \cite[Theorem 7.1]{tanaka_tseng_2018}, the chain complex (\ref{chain complex primitive}) is isomorphic to
    \begin{align*}
    \partial: \Omega^i(M)\oplus\left(\theta\wedge\Omega^{i-1}(M)\right)&\to\Omega^{i+1}(M)\oplus\left(\theta\wedge\Omega^i(M)\right)\ \ (0\leqslant i\leqslant \dim M+1)\\
    \alpha+\theta\wedge\beta&\mapsto d\alpha+\omega\wedge\beta-\theta\wedge\beta
    \end{align*}
    and computes the de Rham cohomology of a circle bundle $\pi: S\to M$. Here, $\theta$ is the angular form along the fiber direction of $S$.  
    In this situation, the primitive superconnection $\mathbb{A}$ is equal to $A+\theta B$, a superconnection on the superbundle $\pi^*E$. With $\mathbb{A} = A+\theta B$, Theorem \ref{main result} becomes an immediate corollary of \cite[Proposition 1.41]{bgv}. Therefore, Theorem \ref{main result} is a reasonable generalization of the classical transgression when the de Rham cohomology class of $\omega$ is non-integral. Also, the element $B\in\Omega^+(M,\edmp(E))$ has certain geometric meanings.

\begin{remark}\normalfont
    As in \cite[Definitions 1.1 \& 1.4]{tseng_zhou_symplectic_flat_functional_characteristic_classes2022} and \cite[(1.1) \& (1.11)]{tseng_and_zhou_2025mapping_yang_mills}, we can replace $\omega$ by a closed $2$-form and still prove Theorem \ref{main result}. However, due to the relation between $\omega$ and the angular form $\theta$ in the integral case, we prefer to present Theorem \ref{main result} using $\omega$. 
\end{remark}

We can now use $\supertracebold\left(f\left(\mathbb{A}^2\right)\right)$ to give the primitive versions of characteristic classes, which are represented by $\partial$-closed elements. According to \cite[Proposition 3.7]{tseng_and_zhou_symplectic_flat_connection_and_twisted_primitive2022}, the vanishing of $\mathbb{A}^2$ is relevant to K\"ahler-Einstein manifolds. For general $\mathbb{A}^2$, we hope the vanishing of primitive characteristic classes can bring us interesting findings. 

This paper is organized as follows. In Section \ref{section primitive bianchi identity}, we review superbundle-valued forms, explain signs and identifications, and prove the first half of Theorem \ref{main result}. In Section \ref{section transgression formula}, we prove the second half of Theorem \ref{main result}. In Section \ref{section char class}, we introduce the concept of primitive characteristic classes involving the symplectic structure on $M$ and propose a question about the relations between primitive characteristic classes and geometric information. 
 
\vspace{+3mm}
\noindent\textbf{Acknowledgments}. I want to thank \professor Xiaobo Liu, \professor Xiang Tang, \professor Li-Sheng Tseng, \professor Shanwen Wang, and \doctor Danhua Song for helpful discussions. Also, I want to thank Beijing International Center for Mathematical Research for providing an excellent working environment.

\section{Primitive Bianchi identity}\label{section primitive bianchi identity}
In this section, we review superbundle-valued forms
on $M$, explain signs and identifications, and prove the first half of Theorem \ref{main result} using the primitive version of the Bianchi identity.

For reviewing superbundle-valued forms and relevant sign conventions, we follow \cite[Sections 1.3-1.5]{bgv} and \cite[Sections 1.2-1.3]{weipingzhangnewedition}. We use ``$\edmp$'' and ``$\text{Hom}$'' to denote endomorphism bundles and homomorphism bundles respectively \cite[Section 1.2]{atiyah2018k}. For the $\mathbb{Z}_2$-graded superbundle $E = E^+\oplus E^-$ over $M$, according to the sign conventions in \cite[Section 1.3]{bgv}, we define
$$\edmp^+(E) \coloneqq \text{Hom}(E^+,E^+)\oplus\text{Hom}(E^-,E^-)$$
and 
$$\edmp^-(E) \coloneqq \text{Hom}(E^+,E^-)\oplus\text{Hom}(E^-,E^+).$$
Immediately, we see that $\edmp(E) = \edmp^+(E)\oplus\edmp^-(E)$. 
In addition, we define 
$$\Omega^+(M,E) \coloneqq \sum_{i\ \text{even}}\Omega^{i}(M,E^+)\oplus\sum_{i\ \text{odd}}\Omega^{i}(M,E^-)$$
and 
$$\Omega^-(M,E) \coloneqq \sum_{i\ \text{odd}}\Omega^{i}(M,E^+)\oplus\sum_{i\ \text{even}}\Omega^{i}(M,E^-). $$
A convenient way to understand $\Omega^+(M,E)$ and $\Omega^-(M,E)$ is via local expressions and total degrees. For any $\alpha\in\Omega^i(M)$, we let $$|\alpha| = i.$$ For any $v\in\Omega^0(M,E^+)$, (resp. $v\in\Omega^0(M,E^-)$), we let $$|v| = 0\ \text{(resp. $|v| = 1$)}.$$ Then, the total degree of $\alpha\otimes v$ is $$|\alpha\otimes v| \coloneqq |\alpha|+|v|.$$ When the total degree is even (resp. odd), $\alpha\otimes v$ is in $\Omega^+(M,E)$ (resp. $\Omega^-(M,E)$).

Given any $T\in\Omega(M,\edmp(E))$, we obtain a map
$$T: \Omega(M,E)\to\Omega(M,E).$$ One important thing to emphasize is that $T$ supercommutes with the elements in $\Omega(M)$. More precisely, if $T = \alpha\otimes L$ for some $\alpha\in\Omega(M)$ and $L\in\Omega^0(M,\edmp(E))$, then for any $\beta\in\Omega(M)$ and $v\in\Omega^0(M,E)$, $T$ maps $\beta\otimes v$ to
$$(-1)^{|L||\beta|}(\alpha\wedge\beta)\otimes L(v).$$
The sign appears here also affects the composition of two elements in $\Omega(M,\edmp(E))$. For example, given another $S = \beta\otimes K$ with $\beta\in\Omega(M)$ and $K\in\Omega^0(M,\edmp(E))$, the composition $ST$ is equal to 
$$(\beta\otimes K)(\alpha\otimes L) = (-1)^{|K||\alpha|}(\beta\wedge\alpha)\otimes(KL).$$
These rules of signs will be applied to computations like (\ref{A2k expression}) where there are compositions of elements in $\Omega(M,\edmp(E))$. 
\begin{remark}\normalfont
    We do not emphasize whether an element is homogeneous when the definitions and equations involving total degrees can be extended linearly to nonhomogeneous cases. 
\end{remark}

Given a differential operator $$D: \Omega(M,E)\to\Omega(M,E),$$ according to the decomposition $\Omega(M,E) = \Omega^+(M,E)\oplus\Omega^-(M,E)$, we write 
$$D = \begin{bmatrix}
    D_1 & D_2\\
    D_3 & D_4
\end{bmatrix}$$
and write any $\alpha\in\Omega(M,E)$ into $\alpha^++\alpha^-$. If for any $\beta\in\Omega(M)$, 
\begin{align}\label{supercommute differential operator version}
    D(\beta\wedge\alpha) = \beta\wedge D_1\alpha^+ + (-1)^{|\beta|}D_2\alpha^- + (-1)^{|\beta|}D_3\alpha^+ + \beta\wedge D_4\alpha^-, 
\end{align}
then by \cite[Section 1.4]{bgv}, we can identify $D$ with a unique element in $\Omega(M,\edmp(E))$. 
Similarly, for any linear map $$\mathbb{D}: \Omega(M,E)\oplus\Omega(M,E)\to \Omega(M,E)\oplus\Omega(M,E),$$ 
if we can find differential operators $D$ and $\tilde{D}$ on $\Omega(M,E)$ such that $D$ and $\tilde{D}$ both satisfy (\ref{supercommute differential operator version}), and such that $\mathbb{D}$ is given by
\begin{align}\label{operator D identified with a pair of endomorphisms}
\begin{split}
    \mathbb{D}: \Omega(M,E)\oplus\Omega(M,E)&\to\Omega(M,E)\oplus\Omega(M,E)\\
    (\alpha,\beta)&\mapsto \left(D\alpha, \tilde{D}\alpha+ D_1\beta^+ - D_2\beta^- - D_3\beta^+ + D_4\beta^-\right), 
\end{split}
\end{align}
then we identify $\mathbb{D}$ with the pair
\begin{align}\label{identifying endomorphisms}
    (D, \tilde{D})\in\Omega(M,\edmp(E))\oplus\Omega(M,\edmp(E)).
\end{align}
By Definition \ref{supertracebold definition}, for such a $\mathbb{D}$ given by (\ref{operator D identified with a pair of endomorphisms}), after the identification (\ref{identifying endomorphisms}), we let $$\supertracebold(\mathbb{D}) = \left(\supertrace(D),\supertrace(\tilde{D})\right)$$ and call it the supertrace of $\mathbb{D}$.

Recall the Definition \ref{extended superconnection} of the primitive superconnection
\begin{align*}
    \mathbb{A}: \Omega(M,E)\oplus\Omega(M,E)&\to \Omega(M,E)\oplus\Omega(M,E)\\
       (\alpha, \beta) &\mapsto (A\alpha+\omega\wedge\beta, B\alpha - A\beta)
\end{align*}
associated with a superconnection $A$ on $E$ and an element $B\in\Omega^+(M,\edmp(E))$.
 
\begin{remark}\label{not a connection remark}\normalfont 
    We still name $\mathbb{A}$ as a ``superconnection'' since when $$(\alpha,\beta)\in\Omega^+(M,E)\oplus\Omega^-(M,E)\ \ \text{(resp. $\Omega^-(M,E)\oplus\Omega^+(M,E)$)},$$ $\mathbb{A}$ maps $(\alpha,\beta)$ to $$(A\alpha+\omega\wedge\beta, B\alpha - A\beta)\in\Omega^{-}(M,E)\oplus\Omega^{+}(M,E)\ \ \text{(resp. $\Omega^+(M,E)\oplus\Omega^-(M,E)$)},$$ reversing the total degree of each component. However, we must emphasize that $\mathbb{A}$ is defined on pairs instead of on $E$-valued forms when the de Rham cohomology class of $\omega$ is not integral. 
\end{remark}

\begin{lemma}
    The even power $\mathbb{A}^{2k}$ of the primitive superconnection defines a unique element in $\Omega^{+}(M,\edmp(E))\oplus\Omega^{-}(M,\edmp(E))$. 
\end{lemma}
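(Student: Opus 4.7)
The plan is to compute $\mathbb{A}^2$ explicitly and then derive the higher even powers $\mathbb{A}^{2k}$ by a short induction. Applying $\mathbb{A}$ twice and using three ingredients — the Leibniz rule for the superconnection $A$, the closedness $d\omega=0$, and the fact that $B\in\Omega^+(M,\edmp(E))$ supercommutes with $\omega$ (both are even) — I would find that the two $\omega\wedge A\beta$ terms cancel in the first component, leaving
$$\mathbb{A}^2(\alpha,\beta) = \bigl((A^2+\omega B)\alpha,\ (BA-AB)\alpha + (A^2+\omega B)\beta\bigr).$$
Set $D\coloneqq A^2+\omega B$ and $\tilde D\coloneqq BA-AB$. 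Then $D$ has even total degree and $\tilde D$ has odd total degree, and I would verify that each supercommutes with $\Omega(M)$ in the sense of (\ref{supercommute differential operator version}). For $D$ this is standard, since $A^2$ is the curvature of the superconnection and $\omega B\in\Omega(M,\edmp(E))$ tautologically. For $\tilde D$ the two Leibniz-anomaly terms $d\eta\wedge B\gamma$ arising from $BA$ and from $AB$ coincide (because $|B|$ is even) and cancel in the commutator. Because $D$ is even and therefore preserves the $\Omega^\pm$ decomposition, its matrix components $D_1,\dots,D_4$ of (\ref{operator D identified with a pair of endomorphisms}) satisfy $D_2=D_3=0$, so the signed action $D_1\beta^+-D_2\beta^--D_3\beta^++D_4\beta^-$ collapses to $D\beta$ and matches the second component above. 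This identifies $\mathbb{A}^2$ uniquely with $(D,\tilde D)\in\Omega^+(M,\edmp(E))\oplus\Omega^-(M,\edmp(E))$.

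For general $k$ I would induct. Assume $\mathbb{A}^{2k-2}(\alpha,\beta) = (D^{k-1}\alpha,\ S_{k-1}\alpha + D^{k-1}\beta)$ with $S_{k-1}=\sum_{i=0}^{k-2}D^i\tilde D D^{k-2-i}\in\Omega^-(M,\edmp(E))$. Applying $\mathbb{A}^2$ once more and again using that $D$ preserves $\Omega^\pm$, one obtains
$$\mathbb{A}^{2k}(\alpha,\beta) = \Bigl(D^k\alpha,\ \Bigl(\textstyle\sum_{i=0}^{k-1}D^i\tilde D D^{k-1-i}\Bigr)\alpha + D^k\beta\Bigr),$$
which is again in the shape required by (\ref{operator D identified with a pair of endomorphisms}). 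Hence $\mathbb{A}^{2k}$ is identified with the unique pair $\bigl(D^k,\ \sum_{i=0}^{k-1}D^i\tilde D D^{k-1-i}\bigr)\in\Omega^+(M,\edmp(E))\oplus\Omega^-(M,\edmp(E))$; parities are automatic since each summand of the second component carries exactly one odd factor $\tilde D$.

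The main obstacle is the sign bookkeeping in the base case: one must verify both that $A(\omega\wedge\beta)=\omega\wedge A\beta$ and $B(\omega\wedge\beta)=\omega\wedge B\beta$ so that the two $\omega\wedge A\beta$ contributions cancel in the first component of $\mathbb{A}^2(\alpha,\beta)$, and that the Leibniz anomaly of $A$ drops out of the supercommutator $BA-AB$, so that $\tilde D$ is a genuine endomorphism-valued form. Once these two checks are in hand, the induction is formal and uniqueness in the lemma follows from the uniqueness built into the identification (\ref{identifying endomorphisms}).
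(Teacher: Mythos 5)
Your proposal is correct and follows essentially the same route as the paper: compute $\mathbb{A}^2(\alpha,\beta)=\left((A^2+\omega\wedge B)\alpha,\ (BA-AB)\alpha+(A^2+\omega\wedge B)\beta\right)$ directly, then obtain the general formula (\ref{A2k expression}) and identify it with the unique pair via (\ref{identifying endomorphisms}). You merely make explicit the induction and the supercommutation/parity checks that the paper leaves implicit when it asserts the $\mathbb{A}^{2k}$ formula ``in general,'' so there is no substantive difference.
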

\begin{proof}
    For any $(\alpha,\beta)\in\Omega(M,E)\oplus\Omega(M,E)$, 
    we find 
    \begin{align}\label{A^2 computation}
         & \mathbb{A}^2(\alpha,\beta) \nonumber\\
        =\ & \mathbb{A}(A\alpha + \omega\wedge\beta, B\alpha-A\beta) \nonumber\\
        =\ & \left(A(A\alpha + \omega\wedge\beta)+\omega\wedge(B\alpha-A\beta), B(A\alpha + \omega\wedge\beta)-A(B\alpha-A\beta)\right) \nonumber\\
        =\ & (A^2\alpha+\omega\wedge B\alpha, A^2\beta+\omega\wedge B\beta+BA\alpha-AB\alpha). 
    \end{align}
    This gives us the unique $$(A^2+\omega\wedge B, BA-AB)\in\Omega^+(M,\edmp(E))\oplus\Omega^-(M,\edmp(E)).$$ 
    In general, we find that 
    \begin{align}\label{A2k expression}
        &\mathbb{A}^{2k}(\alpha,\beta) \nonumber \\
        =\ & \left((A^2+\omega\wedge B)^k\alpha,\ (A^2+\omega\wedge B)^k\beta+\sum_{i = 0}^{k-1}(A^2+\omega\wedge B)^i(BA-AB)(A^2+\omega\wedge B)^{k-1-i}\alpha\right).
    \end{align}
    By (\ref{identifying endomorphisms}), this identifies with the unique pair
    $$\left((A^2+\omega\wedge B)^k,\ \sum_{i = 0}^{k-1}(A^2+\omega\wedge B)^i(BA-AB)(A^2+\omega\wedge B)^{k-1-i}\right)$$
    in $\Omega^+(M,\edmp(E))\oplus\Omega^{-}(M,\edmp(E))$. 
\end{proof}

Using the identification, we have
\begin{align*}
    & \supertracebold(\mathbb{A}^{2k})\\
    =\ & \left(\supertrace\left((A^2+\omega\wedge B)^k\right),\ \supertrace\left(\sum_{i = 0}^{k-1}(A^2+\omega\wedge B)^i(BA-AB)(A^2+\omega\wedge B)^{k-1-i}\right)\right).
\end{align*}

According to \cite[Section 1.4]{bgv}, for any superconnection $A$ and any $\alpha\in\Omega(M,\edmp(E))$, 
we obtain the element $$[A,\alpha]\in\Omega^{k+1}(M,\edmp(E)).$$ The rule of this commutator is given in the way compatible with the Leibniz rule \cite[Definition 1.37]{bgv}: For $\alpha\in\Omega(M,\edmp(E))$, 
$$[A,\alpha] \coloneqq A\alpha - (-1)^{|\alpha|}\alpha A.$$
More precisely, for any $\eta\in\Omega(M,E)$, we have 
$$[A,\alpha]\eta = A(\alpha(\eta))-(-1)^{|\alpha|}\alpha(A\eta).$$
We extend this commutator to 
\begin{align}\label{definition of commutator for pairs}
    \left\llbracket\mathbb{A}, (\alpha,\beta)\right\rrbracket \coloneqq \left([A,\alpha]+\omega\wedge\beta,\ B\alpha-\alpha B-[A,\beta]\right)
\end{align}
for all $(\alpha,\beta)\in\Omega(M,\edmp(E))\oplus\Omega(M,\edmp(E))$. 

\begin{lemma}\label{boundary maps turns into bracket with primitive superconnection}
For any $(\alpha,\beta)\in\Omega(M,\edmp(E))\oplus\Omega(M,\edmp(E))$, we have 
    \begin{align*}
        \partial\hspace{+0.5mm}\supertracebold(\alpha,\beta) = \supertracebold\left\llbracket\mathbb{A},(\alpha,\beta)\right\rrbracket.
    \end{align*}
\end{lemma}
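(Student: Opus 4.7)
The plan is to expand both sides of the equation component by component using the definitions of $\partial$, $\supertracebold$, and $\llbracket \mathbb{A}, \cdot \rrbracket$, and then reduce each component to two classical facts about superconnections: the identity $d\,\supertrace(\gamma) = \supertrace([A,\gamma])$ for $\gamma \in \Omega(M,\edmp(E))$ \cite[Proposition 1.38]{bgv}, and the vanishing of the supertrace on supercommutators.

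Concretely, the left-hand side unfolds as
\begin{align*}
\partial\,\supertracebold(\alpha,\beta) = \bigl(d\,\supertrace(\alpha) + \omega\wedge\supertrace(\beta),\ -d\,\supertrace(\beta)\bigr),
\end{align*}
while the right-hand side, using the definition (\ref{definition of commutator for pairs}), is
\begin{align*}
\supertracebold\llbracket\mathbb{A},(\alpha,\beta)\rrbracket = \bigl(\supertrace([A,\alpha]) + \omega\wedge\supertrace(\beta),\ \supertrace(B\alpha - \alpha B) - \supertrace([A,\beta])\bigr).
\end{align*}
The first components agree by the classical transgression identity $d\,\supertrace(\alpha) = \supertrace([A,\alpha])$. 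For the second components, the same identity gives $d\,\supertrace(\beta) = \supertrace([A,\beta])$, so it remains to verify that $\supertrace(B\alpha - \alpha B) = 0$.

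The key observation for the last step is that $B \in \Omega^{+}(M,\edmp(E))$ has even total degree, so $(-1)^{|B||\alpha|} = 1$ for every homogeneous $\alpha$. Consequently $B\alpha - \alpha B$ coincides with the supercommutator $[B,\alpha]_s$, whose supertrace vanishes by the standard property of $\supertrace$ on $\Omega(M,\edmp(E))$. Linearity extends this to general (not necessarily homogeneous) $\alpha$. I do not anticipate a genuine obstacle here; the argument is bookkeeping, and the only subtle point worth stating carefully is the parity argument that turns $B\alpha - \alpha B$ into an honest supercommutator, which is precisely what makes the asymmetric sign on $-d\,\supertrace(\beta)$ in the definition of $\partial$ match the asymmetric appearance of $B$ in the second slot of $\llbracket \mathbb{A}, \cdot \rrbracket$.
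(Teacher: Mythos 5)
Your proposal is correct and follows essentially the same route as the paper's own proof: expand both sides componentwise, match the first components via $d\,\supertrace(\alpha)=\supertrace([A,\alpha])$, and dispose of the extra term in the second component via $\supertrace(B\alpha-\alpha B)=0$. Your explicit parity remark (that $B$ being even makes $B\alpha-\alpha B$ a genuine supercommutator) is a correct justification of the vanishing that the paper merely asserts.
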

\begin{proof}
This is because 
\begin{align*}
       & \partial\hspace{+0.5mm}\supertracebold(\alpha,\beta)\\
    =\ & (d\supertrace(\alpha)+\omega\wedge\supertrace(\beta), -d\supertrace(\beta))\\
    =\ & (\supertrace[A,\alpha]+\omega\wedge\supertrace(\beta), -\supertrace[A,\beta])\ \ \text{(By \cite[Lemma 1.42]{bgv}.)}\\
    =\ & \supertracebold\left([A,\alpha]+\omega\wedge\beta,\ B\alpha-\alpha B-[A,\beta]\right) \ \ \text{(Since $\supertrace(B\alpha-\alpha B) = 0$.)}\\
    =\ & \supertracebold\left\llbracket\mathbb{A}, (\alpha,\beta)\right\rrbracket.
\end{align*}
The proof is complete. 
\end{proof}

We now prove the primitive Bianchi identity. 
\begin{proposition}\label{bianchi identity primitive version}
   For any $k\in\mathbb{N}$, we have $\left\llbracket\mathbb{A}, \mathbb{A}^{2k}\right\rrbracket = (0, 0)$. 
\end{proposition}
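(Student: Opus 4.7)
The plan is to expand $\llbracket \mathbb{A}, \mathbb{A}^{2k}\rrbracket$ componentwise using the identification from (\ref{A2k expression}) and show that each entry vanishes. Abbreviate $F \coloneqq A^2+\omega\wedge B \in \Omega^+(M,\edmp(E))$ and $G \coloneqq BA - AB \in \Omega^-(M,\edmp(E))$, so that $\mathbb{A}^{2k}$ corresponds to $(\alpha,\beta)$ with $\alpha = F^k$ and $\beta = \sum_{i=0}^{k-1}F^iGF^{k-1-i}$. By the definition (\ref{definition of commutator for pairs}), the two components of $\llbracket\mathbb{A},\mathbb{A}^{2k}\rrbracket$ are $[A,F^k]+\omega\wedge\beta$ and $BF^k - F^k B - [A,\beta]$.

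For the first component I apply the super-Leibniz rule for $[A,\cdot]$ to obtain $[A, F^k] = \sum_{i=0}^{k-1} F^i\hspace{+0.3mm} [A, F]\hspace{+0.3mm} F^{k-1-i}$, where all Koszul signs are trivial since $|F^i|$ is even. The classical Bianchi identity for the superconnection $A$ gives $[A, A^2] = 0$, and because $\omega$ is a closed, central $2$-form one has $[A, \omega\wedge B] = \omega\wedge [A, B] = -\omega\wedge G$ (using $|B|$ even). Therefore $[A, F^k] = -\omega\wedge\beta$, and the first entry cancels on the nose.

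For the second component the key algebraic input is $[B, A^2] = [A, G]$, which I will verify by applying the super-Jacobi identity to $[B, [A, A]] = 2[B, A^2]$; combined with $[B, \omega\wedge B] = \omega\wedge[B,B] = 0$, this yields $[B, F] = [A, G]$ and hence $BF^k - F^k B = \sum_i F^i\hspace{+0.3mm} [A, G]\hspace{+0.3mm} F^{k-1-i}$ by Leibniz. Expanding $[A, \beta]$ via Leibniz on each triple product $F^i G F^{k-1-i}$, and picking up a sign $-1$ on the last factor because $|G|$ is odd, produces a middle term $\sum_i F^i\hspace{+0.3mm} [A, G]\hspace{+0.3mm} F^{k-1-i}$ that cancels $BF^k - F^k B$, together with two $\omega$-weighted double sums arising from the identity $[A, F^m] = -\omega\wedge\sum_{j=0}^{m-1} F^j G F^{m-1-j}$. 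A change of summation indices shows that both double sums equal $\sum_{a+b+c = k-2,\ a,b,c\geq 0} F^a G F^b G F^c$, so they cancel and the second entry also vanishes. The main obstacle is the sign bookkeeping when expanding $[A, F^i G F^{k-1-i}]$: the parity mismatch between even $F$ and odd $G$ is precisely what makes the two triple-sum expressions appear with opposite signs, and a careful reindexing onto the common set of ordered triples $(a,b,c)$ with $a+b+c = k-2$ is what produces the cancellation.
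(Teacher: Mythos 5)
Your argument is correct, and it takes a genuinely different route from the paper. The paper proves Proposition \ref{bianchi identity primitive version} by induction on $k$, writing out both components of $\left\llbracket\mathbb{A},\mathbb{A}^{2k+2}\right\rrbracket$ explicitly from (\ref{A2k expression}) and (\ref{definition of commutator for pairs}) and manipulating them with the two induction hypotheses (\ref{induction assumption first}) and (\ref{induction assumption second}); you instead argue non-inductively at the level of the superalgebra $\Omega(M,\edmp(E))$, using that $[A,\cdot]$ is an odd derivation and $[B,\cdot]$ an even one, the classical Bianchi identity $[A,A^2]=0$, the closedness and centrality of $\omega$, and the checkable identity $[B,A^2]=[A,G]$ with $G=BA-AB$. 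Your first-component computation $[A,F^k]=-\omega\wedge\sum_i F^iGF^{k-1-i}$ is exactly the paper's identity (\ref{induction assumption first}), obtained here in one line from Leibniz rather than by induction, and your second-component cancellation (middle Leibniz terms against $[B,F^k]$, then the two $\omega$-weighted double sums reindexed over triples $a+b+c=k-2$) reproduces the paper's (\ref{induction assumption second}). What your approach buys is brevity and a clear structural reason for the vanishing (it is literally the classical Bianchi identity plus $d\omega=0$ and a Jacobi-type identity); what the paper's approach buys is that it stays entirely within the explicit component formulas, needs no derivation-property formalism, and produces (\ref{induction assumption first})--(\ref{induction assumption second}) as standalone displayed identities that are quoted verbatim in the proof of the transgression formula in Section \ref{section transgression formula}. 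One point to make explicit in a full write-up: the two reindexed triple sums cancel because they enter with opposite signs, the minus sign coming from the Koszul sign $(-1)^{|F^i|+|G|}=-1$ on the last factor in the expansion of $[A,F^iGF^{k-1-i}]$ inside $-[A,\beta]$; you gesture at this, and it is the only place where a sign error could sink the computation.
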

\begin{proof}
    By (\ref{A2k expression}) and (\ref{definition of commutator for pairs}), we find that 
    \begin{align*}
         & \left\llbracket\mathbb{A}, \mathbb{A}^{2k}\right\rrbracket\\
      =\ & \left(\left[A, (A^2+\omega\wedge B)^k\right]+\omega\wedge\sum_{i = 0}^{k-1}(A^2+\omega\wedge B)^i(BA-AB)(A^2+\omega\wedge B)^{k-1-i},\right. \\
       & \ \ \  B(A^2+\omega\wedge B)^k-(A^2+\omega\wedge B)^kB - A\sum_{i = 0}^{k-1}(A^2+\omega\wedge B)^i(BA-AB)(A^2+\omega\wedge B)^{k-1-i}\\
       & \ \ \ \ \ \ \ \ \ \ \ \ \ \ \ \ \ \ \ \ \ \ \ \ \ \ \ \ \ \ \ \ \ \ \ \ \ \ \ \ \ \ \ \ \ \ \ \ \ \ \left.-\sum_{i = 0}^{k-1}(A^2+\omega\wedge B)^i(BA-AB)(A^2+\omega\wedge B)^{k-1-i}A
    \right).
    \end{align*}
    Now, we use induction. When $k = 1$, the verification of the proposition is straightforward. Assume that for $k$, we have
    \begin{align}\label{induction assumption first}
        \left[A, (A^2+\omega\wedge B)^k\right]+\omega\wedge\sum_{i = 0}^{k-1}(A^2+\omega\wedge B)^i(BA-AB)(A^2+\omega\wedge B)^{k-1-i} = 0
    \end{align}
    and 
    \begin{align}\label{induction assumption second}
       & B(A^2+\omega\wedge B)^k-(A^2+\omega\wedge B)^kB - A\sum_{i = 0}^{k-1}(A^2+\omega\wedge B)^i(BA-AB)(A^2+\omega\wedge B)^{k-1-i} \nonumber\\
       & -\sum_{i = 0}^{k-1}(A^2+\omega\wedge B)^i(BA-AB)(A^2+\omega\wedge B)^{k-1-i}A \nonumber\\
       &=\ 0.
    \end{align}
    Then, for $k+1$, we find the first component of $\left\llbracket\mathbb{A}, \mathbb{A}^{2k+2}\right\rrbracket$ is equal to
    \begin{align*}
        & \left[A, (A^2+\omega\wedge B)^{k+1}\right]+\omega\wedge\sum_{i = 0}^{k}(A^2+\omega\wedge B)^i(BA-AB)(A^2+\omega\wedge B)^{k-i}\\
        =\ & A(A^2+\omega\wedge B)^{k+1}-(A^2+\omega\wedge B)^{k+1}A+\omega\wedge\sum_{i = 1}^{k-1}(A^2+\omega\wedge B)^i(BA-AB)(A^2+\omega\wedge B)^{k-i}\\
        & +\omega\wedge(BA-AB)(A^2+\omega\wedge B)^k+\omega\wedge(A^2+\omega\wedge B)^k(BA-AB)\\
        =\ & \left(A(A^2+\omega\wedge B)^k+\omega\wedge\sum_{i = 0}^{k-1}(A^2+\omega\wedge B)^i(BA-AB)(A^2+\omega\wedge B)^{k-i-1}\right)(A^2+\omega\wedge B)\\
        & -(A^2+\omega\wedge B)^{k+1}A+\omega\wedge(A^2+\omega\wedge B)^k(BA-AB)\\[2mm]
        & \text{(By (\ref{induction assumption first}) $\Rightarrow$)}\\
        =\ & (A^2+\omega\wedge B)^k A(A^2+\omega\wedge B)-(A^2+\omega\wedge B)^{k+1}A+\omega\wedge(A^2+\omega\wedge B)^k(BA-AB) \\[2mm]
        & \text{(since $\omega$ is a closed $2$-form\ $\Rightarrow$)}\\
        =\ & (A^2+\omega\wedge B)^k\left(A(A^2+\omega\wedge B)-(A^2+\omega\wedge B)A+\omega\wedge(BA-AB)\right) \ \\
        =\ & 0.
    \end{align*}
    We now compute the second component of $\left\llbracket\mathbb{A}, \mathbb{A}^{2k+2}\right\rrbracket$. It is equal to
    \begin{align*}
        & B(A^2+\omega\wedge B)^{k+1}-(A^2+\omega\wedge B)^{k+1}B\\
       & -A\sum_{i = 0}^{k}(A^2+\omega\wedge B)^i(BA-AB)(A^2+\omega\wedge B)^{k-i}\\
       & -\sum_{i = 0}^{k}(A^2+\omega\wedge B)^i(BA-AB)(A^2+\omega\wedge B)^{k-i}A\\
       =\ & \left(B(A^2+\omega\wedge B)^{k}-A\sum_{i = 0}^{k-1}(A^2+\omega\wedge B)^i(BA-AB)(A^2+\omega\wedge B)^{k-i-1}\right)(A^2+\omega\wedge B)\\
       & -(A^2+\omega\wedge B)^{k+1}B - A(A^2+\omega\wedge B)^k(BA-AB)\\
       & -\sum_{i = 0}^{k}(A^2+\omega\wedge B)^i(BA-AB)(A^2+\omega\wedge B)^{k-i}A\\[2mm]
       & \text{(By (\ref{induction assumption second})\ $\Rightarrow$)}\\
       =\ & \left(\sum_{i = 0}^{k-1}(A^2+\omega\wedge B)^i(BA-AB)(A^2+\omega\wedge B)^{k-i-1}A + (A^2+\omega\wedge B)^kB\right)(A^2+\omega\wedge B)\\
       & -(A^2+\omega\wedge B)^{k+1}B - A(A^2+\omega\wedge B)^k(BA-AB)\\
       & -\sum_{i = 0}^{k}(A^2+\omega\wedge B)^i(BA-AB)(A^2+\omega\wedge B)^{k-i}A \\
       =\ & \sum_{i = 0}^{k-1}(A^2+\omega\wedge B)^i(BA-AB)(A^2+\omega\wedge B)^{k-i-1}A(A^2+\omega\wedge B)\\
       & + (A^2+\omega\wedge B)^kB(A^2+\omega\wedge B) -(A^2+\omega\wedge B)^{k+1}B - A(A^2+\omega\wedge B)^k(BA-AB)\\
       & -\sum_{i = 0}^{k}(A^2+\omega\wedge B)^i(BA-AB)(A^2+\omega\wedge B)^{k-i}A\\
       =\ & \sum_{i = 0}^{k-1}(A^2+\omega\wedge B)^i(BA-AB)(A^2+\omega\wedge B)^{k-i-1}A(A^2+\omega\wedge B)\\
       & -\sum_{i = 0}^{k}(A^2+\omega\wedge B)^i(BA-AB)(A^2+\omega\wedge B)^{k-i}A\\
       & + (A^2+\omega\wedge B)^k\left(B(A^2+\omega\wedge B)-(A^2+\omega\wedge B)B\right) -A(A^2+\omega\wedge B)^k(BA-AB)\\
       =\ & \sum_{i = 0}^{k-1}(A^2+\omega\wedge B)^i(BA-AB)(A^2+\omega\wedge B)^{k-i-1}A(A^2+\omega\wedge B)\\
       & -\sum_{i = 0}^{k-1}(A^2+\omega\wedge B)^i(BA-AB)(A^2+\omega\wedge B)^{k-i}A\\
       & -(A^2+\omega\wedge B)^k(BA-AB)A + (A^2+\omega\wedge B)^k\left(BA^2-A^2B\right) - A(A^2+\omega\wedge B)^k(BA-AB)\\
       =\ & \sum_{i = 0}^{k-1}(A^2+\omega\wedge B)^i(BA-AB)(A^2+\omega\wedge B)^{k-i-1}\left(A(A^2+\omega\wedge B)-(A^2+\omega\wedge B)A\right)\\
       & -(A^2+\omega\wedge B)^k(BA-AB)A + (A^2+\omega\wedge B)^k\left(BA^2-A^2B\right) -A(A^2+\omega\wedge B)^k(BA-AB)\\
       =\ & \omega\wedge\sum_{i = 0}^{k-1}(A^2+\omega\wedge B)^i(BA-AB)(A^2+\omega\wedge B)^{k-i-1}(AB-BA)\\
       & -(A^2+\omega\wedge B)^k(BA-AB)A + (A^2+\omega\wedge B)^k\left(BA^2-A^2B\right) - A(A^2+\omega\wedge B)^k(BA-AB)\\[2mm]
       &\text{(By (\ref{induction assumption first})\ $\Rightarrow$)}\\
       =\ & -[A, (A^2+\omega\wedge B)^k](AB-BA)\\
       & -(A^2+\omega\wedge B)^k(BA-AB)A + (A^2+\omega\wedge B)^k\left(BA^2-A^2B\right) -A(A^2+\omega\wedge B)^k(BA-AB)\\
       =\ & (-A(A^2+\omega\wedge B)^k+(A^2+\omega\wedge B)^kA)(AB-BA)\\
       & -(A^2+\omega\wedge B)^k(BA^2-ABA) + (A^2+\omega\wedge B)^k\left(BA^2-A^2B\right) -A(A^2+\omega\wedge B)^k(BA-AB)\\
       =\ & (A^2+\omega\wedge B)^kA(AB-BA) - (A^2+\omega\wedge B)^k(BA^2-ABA) + (A^2+\omega\wedge B)^k\left(BA^2-A^2B\right)\\
       =\ & (A^2+\omega\wedge B)^k(A^2B-ABA-BA^2+ABA+BA^2-A^2B)\\
       =\ & 0.
    \end{align*}
    Thus, we finish the proof for the $k+1$ case and obtain the primitive Bianchi identity. 
\end{proof}

    By Proposition \ref{bianchi identity primitive version}, for any $k\in\mathbb{N}$, $$\partial\hspace{+0.5mm}\supertracebold(\mathbb{A}^{2k}) = \supertracebold\left\llbracket\mathbb{A},\mathbb{A}^{2k}\right\rrbracket = (0, 0).$$
    The proof of the first half of Theorem \ref{main result} is complete.

\section{Transgression formula}\label{section transgression formula}
In this section, we prove the second half of Theorem \ref{main result}, the transgression formula for any smooth family of primitive superconnections. We follow the procedure similar to that one proving \cite[Proposition 1.41, item 2]{bgv}. 

Given a smooth family of primitive superconnections
\begin{align*}
    \mathbb{A}_t: \Omega(M,E)\oplus\Omega(M,E)&\to\Omega(M,E)\oplus\Omega(M,E)\\
    (\alpha,\beta)&\mapsto (A_t\alpha+\omega\wedge\beta, B_t\alpha-A_t\beta), 
\end{align*}
its derivative with respect to $t$ is the map
\begin{align*}
    \dfrac{d\mathbb{A}_t}{dt}: \Omega(M,E)\oplus\Omega(M,E)&\to\Omega(M,E)\oplus\Omega(M,E)\\
    (\alpha,\beta)&\mapsto \left(\dfrac{dA_t}{dt}\alpha, \dfrac{dB_t}{dt}\alpha-\dfrac{dA_t}{dt}\beta\right).
\end{align*}
This identifies with the element
$$\left(\dfrac{dA_t}{dt}, \dfrac{dB_t}{dt}\right)\in\Omega^-(M,\edmp(E))\oplus\Omega^+(M,\edmp(E)).$$
In addition, for any $k\in\mathbb{N}$, the composition $\dfrac{d\mathbb{A}_t}{dt}\mathbb{A}_t^{2k}$ is given by
\begin{align}\label{dA_tA_t^{2k} expression}
       & \dfrac{d\mathbb{A}_t}{dt}\mathbb{A}_t^{2k}(\alpha,\beta) \nonumber \\
    = & \dfrac{d\mathbb{A}_t}{dt}\left((A_t^2+\omega\wedge B_t)^k\alpha,\ (A_t^2+\omega\wedge B_t)^k\beta+\sum_{i = 0}^{k-1}(A_t^2+\omega\wedge B_t)^i(B_tA_t-A_tB_t)(A_t^2+\omega\wedge B_t)^{k-1-i}\alpha\right) \nonumber \\
    = & \Bigg(\dfrac{dA_t}{dt}(A_t^2+\omega\wedge B_t)^k\alpha, \nonumber \\
    & \dfrac{dB_t}{dt}(A_t^2+\omega\wedge B_t)^k\alpha - \dfrac{dA_t}{dt}\left(\hspace{-1mm}(A_t^2+\omega\wedge B_t)^k\beta+\sum_{i = 0}^{k-1}(A_t^2+\omega\wedge B_t)^i(B_tA_t-A_tB_t)(A_t^2+\omega\wedge B_t)^{k-1-i}\alpha\right)\hspace{-1.5mm}\Bigg)
\end{align}
for all $(\alpha,\beta)\in\Omega(M,E)\oplus\Omega(M,E)$.
This identifies with an element 
$$\left(\dfrac{dA_t}{dt}(A_t^2+\omega\wedge B_t)^k,\ \dfrac{dB_t}{dt}(A_t^2+\omega\wedge B_t)^k-\dfrac{dA_t}{dt}\sum_{i = 0}^{k-1}(A_t^2+\omega\wedge B_t)^i(B_tA_t-A_tB_t)(A_t^2+\omega\wedge B_t)^{k-1-i}\right)$$
in $$\Omega^{-}(M,\edmp(E))\oplus\Omega^{+}(M,\edmp(E)).$$
Accordingly, we obtain $$\supertracebold\left(\dfrac{d\mathbb{A}_t}{dt}\mathbb{A}_t^{2k}\right).$$

At present, to prove the transgression formula, we only need to show 
$$\dfrac{1}{k+1}\dfrac{d}{dt}\supertracebold\left(\mathbb{A}_t^{2k+2}\right) = \partial\hspace{0.5mm}\supertracebold\left(\dfrac{d\mathbb{A}_t}{dt}\mathbb{A}_t^{2k}\right).$$
According to Lemma \ref{boundary maps turns into bracket with primitive superconnection}, it is equivalent to show 
    \begin{align*}
        \dfrac{1}{k+1}\dfrac{d}{dt}\supertracebold\left(\mathbb{A}_t^{2k+2}\right) = \supertracebold\left\llbracket\mathbb{A}_t, \dfrac{d\mathbb{A}_t}{dt}\mathbb{A}_t^{2k}\right\rrbracket.
    \end{align*}
By (\ref{A2k expression}), we find 
\begin{align*}
& \dfrac{d}{dt}\supertracebold\left(\mathbb{A}_t^{2k+2}\right)\\
=\ & \left(\supertrace\left(\dfrac{d}{dt}(A_t^2+\omega\wedge B_t)^{k+1}\right),\ \dfrac{d}{dt}\supertrace\left(\sum_{i = 0}^{k-1}(A_t^2+\omega\wedge B_t)^i(B_tA_t-A_tB_t)(A_t^2+\omega\wedge B_t)^{k-1-i}\right)\right)\\
=\ & \left(\supertrace\left(\dfrac{d}{dt}(A_t^2+\omega\wedge B_t)^{k+1}\right),\ \dfrac{d}{dt}\supertrace\left((k+1)(A_t^2+\omega\wedge B_t)^k(B_tA_t-A_tB_t)\right)\right)\\
=\ & \left(\supertrace\left(\dfrac{d}{dt}(A_t^2+\omega\wedge B_t)^{k+1}\right),\ \supertrace\left((k+1)\dfrac{d}{dt}\left((A_t^2+\omega\wedge B_t)^k(B_tA_t-A_tB_t)\right)\right)\right).
\end{align*}
In addition, by (\ref{definition of commutator for pairs}), the first component of $\left\llbracket\mathbb{A}_t, \dfrac{d\mathbb{A}_t}{dt}\mathbb{A}_t^{2k}\right\rrbracket$ is equal to
\begin{align*}
    & A_t\dfrac{dA_t}{dt}(A_t^2+\omega\wedge B_t)^k+\dfrac{dA_t}{dt}(A_t^2+\omega\wedge B_t)^kA_t\\
    & + \omega\wedge\dfrac{dB_t}{dt}(A_t^2+\omega\wedge B_t)^k-\omega\wedge\dfrac{dA_t}{dt}\sum_{i = 0}^{k-1}(A_t^2+\omega\wedge B_t)^i(B_tA_t-A_tB_t)(A_t^2+\omega\wedge B_t)^{k-1-i}.
\end{align*}
We first need to verify that 
\begin{align}\label{first component want to show in supertrace}
    & \dfrac{1}{k+1}\supertrace\left(\dfrac{d}{dt}(A_t^2+\omega\wedge B_t)^{k+1}\right) \nonumber\\
    =\ & \supertrace\Bigg(A_t\dfrac{dA_t}{dt}(A_t^2+\omega\wedge B_t)^k+\dfrac{dA_t}{dt}(A_t^2+\omega\wedge B_t)^kA_t \nonumber\\
    & + \omega\wedge\dfrac{dB_t}{dt}(A_t^2+\omega\wedge B_t)^k-\omega\wedge\dfrac{dA_t}{dt}\sum_{i = 0}^{k-1}(A_t^2+\omega\wedge B_t)^i(B_tA_t-A_tB_t)(A_t^2+\omega\wedge B_t)^{k-1-i}\Bigg).
\end{align}
The left hand side of (\ref{first component want to show in supertrace}) is equal to 
\begin{align*}
     & \supertrace\left(\dfrac{d(A_t^2+\omega\wedge B_t)}{dt}(A_t^2+\omega\wedge B_t)^k\right)\\
    =\ & \supertrace\left(\left(A_t\dfrac{dA_t}{dt}+\dfrac{dA_t}{dt}A_t+\omega\wedge\dfrac{dB_t}{dt}\right)(A_t^2+\omega\wedge B_t)^k\right).
\end{align*}
Thus, (\ref{first component want to show in supertrace}) is equivalent to
\begin{align*}
     & \supertrace\left(\left(A_t\dfrac{dA_t}{dt}+\dfrac{dA_t}{dt}A_t+\omega\wedge\dfrac{dB_t}{dt}\right)(A_t^2+\omega\wedge B_t)^k\right)\\
    =\ & \supertrace\Bigg(A_t\dfrac{dA_t}{dt}(A_t^2+\omega\wedge B_t)^k+\dfrac{dA_t}{dt}(A_t^2+\omega\wedge B_t)^kA_t\\
    & + \omega\wedge\dfrac{dB_t}{dt}(A_t^2+\omega\wedge B_t)^k-\omega\wedge\dfrac{dA_t}{dt}\sum_{i = 0}^{k-1}(A_t^2+\omega\wedge B_t)^i(B_tA_t-A_tB_t)(A_t^2+\omega\wedge B_t)^{k-1-i}\Bigg)
\end{align*}
$\Leftrightarrow$
\begin{align*}
     & \supertrace\left(\dfrac{dA_t}{dt}A_t(A_t^2+\omega\wedge B_t)^k\right)\\
    =\ & \supertrace\left(\dfrac{dA_t}{dt}(A_t^2+\omega\wedge B_t)^kA_t-\omega\wedge\dfrac{dA_t}{dt}\sum_{i = 0}^{k-1}(A_t^2+\omega\wedge B_t)^i(B_tA_t-A_tB_t)(A_t^2+\omega\wedge B_t)^{k-1-i}\right)
\end{align*}
$\Leftrightarrow$
\begin{align*}
   0 =\ &\supertrace\Bigg(\dfrac{dA_t}{dt}A_t(A_t^2+\omega\wedge B_t)^k-\dfrac{dA_t}{dt}(A_t^2+\omega\wedge B_t)^kA_t\\
    & \ \ \ \ \ \ +\omega\wedge\dfrac{dA_t}{dt}\sum_{i = 0}^{k-1}(A_t^2+\omega\wedge B_t)^i(B_tA_t-A_tB_t)(A_t^2+\omega\wedge B_t)^{k-1-i}\Bigg).
\end{align*}
The last equation exactly repeats (\ref{induction assumption first}), and thus (\ref{first component want to show in supertrace}) is true. 

The second component of $$\left\llbracket\mathbb{A}_t, \dfrac{d\mathbb{A}_t}{dt}f'(\mathbb{A}_t^2)\right\rrbracket$$
is equal to 
\begin{align*}
&B_t\dfrac{dA_t}{dt}(A_t^2+\omega\wedge B_t)^k - \dfrac{dA_t}{dt}(A_t^2+\omega\wedge B_t)^kB_t\\
    & -A_t\dfrac{dB_t}{dt}(A_t^2+\omega\wedge B_t)^k + A_t\dfrac{dA_t}{dt}\sum_{i = 0}^{k-1}(A_t^2+\omega\wedge B_t)^i(B_tA_t-A_tB_t)(A_t^2+\omega\wedge B_t)^{k-1-i} \\
    & +\dfrac{dB_t}{dt}(A_t^2+\omega\wedge B_t)^kA_t - \dfrac{dA_t}{dt}\sum_{i = 0}^{k-1}(A_t^2+\omega\wedge B_t)^i(B_tA_t-A_tB_t)(A_t^2+\omega\wedge B_t)^{k-1-i}A_t.
\end{align*}
By the property \cite[Definition 1.30, Proposition 1.31]{bgv} of supertraces, the supertrace of the second component is equal to  
\begin{align*}
    & \supertrace\Bigg(-A_t\dfrac{dB_t}{dt}(A_t^2+\omega\wedge B_t)^k + A_t\dfrac{dA_t}{dt}\sum_{i = 0}^{k-1}(A_t^2+\omega\wedge B_t)^i(B_tA_t-A_tB_t)(A_t^2+\omega\wedge B_t)^{k-1-i} \\
    & \ \ \ \ \ \ +\dfrac{dB_t}{dt}(A_t^2+\omega\wedge B_t)^kA_t - \dfrac{dA_t}{dt}\sum_{i = 0}^{k-1}(A_t^2+\omega\wedge B_t)^i(B_tA_t-A_tB_t)(A_t^2+\omega\wedge B_t)^{k-1-i}A_t\Bigg).
\end{align*}
We now need to verify that
\begin{align}\label{final part in the transgression to show}
    & \supertrace\Bigg(-A_t\dfrac{dB_t}{dt}(A_t^2+\omega\wedge B_t)^k + A_t\dfrac{dA_t}{dt}\sum_{i = 0}^{k-1}(A_t^2+\omega\wedge B_t)^i(B_tA_t-A_tB_t)(A_t^2+\omega\wedge B_t)^{k-1-i} \nonumber\\
    & \ \ \ \ \ \ +\dfrac{dB_t}{dt}(A_t^2+\omega\wedge B_t)^kA_t - \dfrac{dA_t}{dt}\sum_{i = 0}^{k-1}(A_t^2+\omega\wedge B_t)^i(B_tA_t-A_tB_t)(A_t^2+\omega\wedge B_t)^{k-1-i}A_t\Bigg) \nonumber\\
    =\ & \supertrace\left(\dfrac{d}{dt}\left((A_t^2+\omega\wedge B_t)^k(B_tA_t-A_tB_t)\right)\right).
\end{align}
The right hand side of (\ref{final part in the transgression to show}) is equal to 
\begin{align*}
    & \supertrace\Bigg(\sum_{i = 0}^{k-1}(A_t^2+\omega\wedge B_t)^i\dfrac{d(A_t^2+\omega\wedge B_t)}{dt}(A_t^2+\omega\wedge B_t)^{k-i-1}(B_tA_t-A_tB_t)\\
    & \ \ \ \ \ \ \ \ + (A_t^2+\omega\wedge B_t)^k\dfrac{d(B_tA_t-A_tB_t)}{dt}\Bigg)\\
    =\ & \supertrace\Bigg(\sum_{i = 0}^{k-1}\dfrac{d(A_t^2+\omega\wedge B_t)}{dt}(A_t^2+\omega\wedge B_t)^{k-i-1}(B_tA_t-A_tB_t)(A_t^2+\omega\wedge B_t)^i\\
    & \ \ \ \ \ \ \ \ + (A_t^2+\omega\wedge B_t)^k\dfrac{d(B_tA_t-A_tB_t)}{dt}\Bigg)\\
    =\ & \supertrace\Bigg(\sum_{i = 0}^{k-1}\dfrac{d(A_t^2+\omega\wedge B_t)}{dt}(A_t^2+\omega\wedge B_t)^{i}(B_tA_t-A_tB_t)(A_t^2+\omega\wedge B_t)^{k-i-1}\\
    & \ \ \ \ \ \ \ \ + (A_t^2+\omega\wedge B_t)^k\dfrac{d(B_tA_t-A_tB_t)}{dt}\Bigg)\\
    =\ & \supertrace\Bigg(A_t\dfrac{dA_t}{dt}\sum_{i = 0}^{k-1}(A_t^2+\omega\wedge B_t)^{i}(B_tA_t-A_tB_t)(A_t^2+\omega\wedge B_t)^{k-i-1}\\
    & \ \ \ \ \ \ \ \ + \dfrac{dA_t}{dt}A_t\sum_{i = 0}^{k-1}(A_t^2+\omega\wedge B_t)^{i}(B_tA_t-A_tB_t)(A_t^2+\omega\wedge B_t)^{k-i-1}\\
    & \ \ \ \ \ \ \ \ + \omega\wedge\dfrac{dB_t}{dt}\sum_{i = 0}^{k-1}(A_t^2+\omega\wedge B_t)^{i}(B_tA_t-A_tB_t)(A_t^2+\omega\wedge B_t)^{k-i-1}\\
    & \ \ \ \ \ \ \ \ + (A_t^2+\omega\wedge B_t)^k\dfrac{dB_t}{dt}A_t + (A_t^2+\omega\wedge B_t)^k B_t\dfrac{dA_t}{dt}\\
    & \ \ \ \ \ \ \ \ - (A_t^2+\omega\wedge B_t)^k\dfrac{dA_t}{dt}B_t - (A_t^2+\omega\wedge B_t)^k A_t\dfrac{dB_t}{dt}\Bigg).
\end{align*}
Thus, the verification of (\ref{final part in the transgression to show}) is equivalent to verifying 
\begin{align*}
    & \supertrace\Bigg(-A_t\dfrac{dB_t}{dt}(A_t^2+\omega\wedge B_t)^k + A_t\dfrac{dA_t}{dt}\sum_{i = 0}^{k-1}(A_t^2+\omega\wedge B_t)^i(B_tA_t-A_tB_t)(A_t^2+\omega\wedge B_t)^{k-1-i} \\
    & \ \ \ \ \ \ +\dfrac{dB_t}{dt}(A_t^2+\omega\wedge B_t)^kA_t - \dfrac{dA_t}{dt}\sum_{i = 0}^{k-1}(A_t^2+\omega\wedge B_t)^i(B_tA_t-A_tB_t)(A_t^2+\omega\wedge B_t)^{k-1-i}A_t\Bigg)\\
=\ & \supertrace\Bigg(A_t\dfrac{dA_t}{dt}\sum_{i = 0}^{k-1}(A_t^2+\omega\wedge B_t)^{i}(B_tA_t-A_tB_t)(A_t^2+\omega\wedge B_t)^{k-i-1}\\
    & \ \ \ \ \ \ \ \ + \dfrac{dA_t}{dt}A_t\sum_{i = 0}^{k-1}(A_t^2+\omega\wedge B_t)^{i}(B_tA_t-A_tB_t)(A_t^2+\omega\wedge B_t)^{k-i-1}\\
    & \ \ \ \ \ \ \ \ + \omega\wedge\dfrac{dB_t}{dt}\sum_{i = 0}^{k-1}(A_t^2+\omega\wedge B_t)^{i}(B_tA_t-A_tB_t)(A_t^2+\omega\wedge B_t)^{k-i-1}\\
    & \ \ \ \ \ \ \ \ + (A_t^2+\omega\wedge B_t)^k\dfrac{dB_t}{dt}A_t + (A_t^2+\omega\wedge B_t)^k B_t\dfrac{dA_t}{dt}\\
    & \ \ \ \ \ \ \ \ - (A_t^2+\omega\wedge B_t)^k\dfrac{dA_t}{dt}B_t - (A_t^2+\omega\wedge B_t)^k A_t\dfrac{dB_t}{dt}\Bigg).
\end{align*}
This is equivalent to verify 
\begin{align*}
    & \supertrace\Bigg(-A_t\dfrac{dB_t}{dt}(A_t^2+\omega\wedge B_t)^k\\
    & \ \ \ \ \ \ +\dfrac{dB_t}{dt}(A_t^2+\omega\wedge B_t)^kA_t - \dfrac{dA_t}{dt}\sum_{i = 0}^{k-1}(A_t^2+\omega\wedge B_t)^i(B_tA_t-A_tB_t)(A_t^2+\omega\wedge B_t)^{k-1-i}A_t\Bigg)\\
=\ & \supertrace\Bigg(\dfrac{dA_t}{dt}A_t\sum_{i = 0}^{k-1}(A_t^2+\omega\wedge B_t)^{i}(B_tA_t-A_tB_t)(A_t^2+\omega\wedge B_t)^{k-i-1}\\
    & \ \ \ \ \ \ \ \ + \omega\wedge\dfrac{dB_t}{dt}\sum_{i = 0}^{k-1}(A_t^2+\omega\wedge B_t)^{i}(B_tA_t-A_tB_t)(A_t^2+\omega\wedge B_t)^{k-i-1}\\
    & \ \ \ \ \ \ \ \ + (A_t^2+\omega\wedge B_t)^k\dfrac{dB_t}{dt}A_t + (A_t^2+\omega\wedge B_t)^k B_t\dfrac{dA_t}{dt}\\
    & \ \ \ \ \ \ \ \ - (A_t^2+\omega\wedge B_t)^k\dfrac{dA_t}{dt}B_t - (A_t^2+\omega\wedge B_t)^k A_t\dfrac{dB_t}{dt}\Bigg).
\end{align*}
Using (\ref{induction assumption first}) and (\ref{induction assumption second}), now it is equivalent to prove
\begin{align*}
       & \supertrace\left(-A_t\dfrac{dB_t}{dt}(A_t^2+\omega\wedge B_t)^k +\dfrac{dB_t}{dt}(A_t^2+\omega\wedge B_t)^kA_t\right) \\
    =\ & \supertrace\Bigg(\dfrac{dA_t}{dt}A_t\sum_{i = 0}^{k-1}(A_t^2+\omega\wedge B_t)^{i}(B_tA_t-A_tB_t)(A_t^2+\omega\wedge B_t)^{k-i-1}\\
    & \ \ \ \ \ \ \ \ + \dfrac{dA_t}{dt}\sum_{i = 0}^{k-1}(A_t^2+\omega\wedge B_t)^i(B_tA_t-A_tB_t)(A_t^2+\omega\wedge B_t)^{k-1-i}A_t\\
       & \ \ \ \ \ \ \ \ + \dfrac{dB_t}{dt}\left((A_t^2+\omega\wedge B_t)^k A_t-A_t(A_t^2+\omega\wedge B_t)^k\right)\\
       & \ \ \ \ \ \ \ \ + (A_t^2+\omega\wedge B_t)^k\dfrac{dB_t}{dt}A_t + (A_t^2+\omega\wedge B_t)^k B_t\dfrac{dA_t}{dt}\\
       & \ \ \ \ \ \ \ \ - (A_t^2+\omega\wedge B_t)^k\dfrac{dA_t}{dt}B_t - (A_t^2+\omega\wedge B_t)^k A_t\dfrac{dB_t}{dt}\Bigg)
\end{align*}
$\Leftrightarrow$
\begin{align*}
       & \supertrace\left(-A_t\dfrac{dB_t}{dt}(A_t^2+\omega\wedge B_t)^k +\dfrac{dB_t}{dt}(A_t^2+\omega\wedge B_t)^kA_t\right) \\
    =\ & \supertrace\Bigg(\dfrac{dA_t}{dt}B_t(A_t^2+\omega\wedge B_t)^k -  \dfrac{dA_t}{dt}(A_t^2+\omega\wedge B_t)^k B_t\\
       & \ \ \ \ \ \ \ \ + \dfrac{dB_t}{dt}(A_t^2+\omega\wedge B_t)^k A_t-\dfrac{dB_t}{dt}A_t(A_t^2+\omega\wedge B_t)^k\\
       & \ \ \ \ \ \ \ \ + (A_t^2+\omega\wedge B_t)^k\dfrac{dB_t}{dt}A_t + (A_t^2+\omega\wedge B_t)^k B_t\dfrac{dA_t}{dt}\\
       & \ \ \ \ \ \ \ \ - (A_t^2+\omega\wedge B_t)^k\dfrac{dA_t}{dt}B_t - (A_t^2+\omega\wedge B_t)^k A_t\dfrac{dB_t}{dt}\Bigg)
\end{align*}
$\Leftrightarrow$ $\left(\text{This step is because $\supertrace\left(-\dfrac{dA_t}{dt}(A_t^2+\omega\wedge B_t)^k B_t+ (A_t^2+\omega\wedge B_t)^k B_t\dfrac{dA_t}{dt}\right)$} = 0.\right)$
\begin{align*}
    & \supertrace\left(-A_t\dfrac{dB_t}{dt}(A_t^2+\omega\wedge B_t)^k\right) \\
    =\ & \supertrace\left((A_t^2+\omega\wedge B_t)^k\dfrac{dB_t}{dt}A_t -  \dfrac{dB_t}{dt}A_t(A_t^2+\omega\wedge B_t)^k - (A_t^2+\omega\wedge B_t)^k A_t\dfrac{dB_t}{dt}\right)
\end{align*}
$\Leftrightarrow$
\begin{align*}
   \supertrace\left((A_t^2+\omega\wedge B_t)^k\dfrac{dB_t}{dt}A_t -  \dfrac{dB_t}{dt}A_t(A_t^2+\omega\wedge B_t)^k - (A_t^2+\omega\wedge B_t)^k A_t\dfrac{dB_t}{dt}+A_t\dfrac{dB_t}{dt}(A_t^2+\omega\wedge B_t)^k\right) = 0
\end{align*}
$\Leftrightarrow$
\begin{align*}
   \supertrace\left((A_t^2+\omega\wedge B_t)^k\left(\dfrac{dB_t}{dt}A_t-A_t\dfrac{dB_t}{dt}\right) - \left(\dfrac{dB_t}{dt}A_t-A_t\dfrac{dB_t}{dt}\right)(A_t^2+\omega\wedge B_t)^k\right) = 0.
\end{align*}
The last equation is true because of \cite[Definition 1.30, Proposition 1.31]{bgv}. Thus, (\ref{final part in the transgression to show}) holds true. The proof of the second half of Theorem \ref{main result} is complete.

\section{Characteristic classes involving $\omega$}\label{section char class}
In this section, we introduce the primitive versions of some characteristic classes. Their constructions involve the information from both the superbundle $E$ and the symplectic form $\omega$. Also, we propose a possible further direction. 

\begin{example}\label{example 1}\normalfont
    Following \cite[(1.30)]{bgv}, we choose $f(z) = e^{-z}$ in Theorem \ref{main result}. Then, we obtain
$$\text{ch}(\mathbb{A},\omega)\coloneqq\supertrace\left(e^{-\mathbb{A}^2}\right)$$
for any primitive superconnection $\mathbb{A}$. Since $\dim M<\infty$, we have $\mathbb{A}^{2k} = (0,0)$ when $2k>\dim M$. Therefore, the power series 
$$e^{-z} = 1-z+\dfrac{1}{2!}z^2-\dfrac{1}{3!}z^3+\cdots$$
becomes a finite sum after we plug $\mathbb{A}^2$ into $e^{-z}$, and $\text{ch}(\mathbb{A},\omega)$ is well-defined. 
\end{example}
\begin{remark}\normalfont
    Similar to \cite[Section 1.6.5]{wittendeformationweipingzhang}, using $\text{ch}(\mathbb{A},\omega)$, the readers may expect the realization of the primitive version of the Chern-Simons form. In addition, when the de Rham cohomology class of $\omega$ is integral, this Chern-Simons form should recover the Chern-Simons form \cite[Section 4]{tseng_zhou_symplectic_flat_functional_characteristic_classes2022} on the associated circle bundle over $M$. 
\end{remark}

Besides the Chern character, we can also define other primitive characteristic classes like the primitive version of the A-hat genus. 
\begin{example}\label{example 2}\normalfont
Suppose that $E^- = 0$, $B\in\Omega^0(M,\edmp(E))$, and
$$\nabla: \Omega^i(M,E)\to\Omega^{i+1}(M,E)$$ is a Koszul connection \cite[Definition 1.4]{wittendeformationweipingzhang} on $E$. Then, in this situation, we let
\begin{align*}
    \mathbb{A}: \Omega^{i}(M,E)\oplus\Omega^{i-1}(M,E)&\to\Omega^{i+1}(M,E)\oplus\Omega^{i}(M,E)\\
    (\alpha,\beta)&\mapsto (\nabla\alpha + \omega\wedge\beta, B\alpha-\nabla\beta), 
\end{align*}
and the supertrace $\supertracebold$ becomes trace $\tracebold$. Following \cite[(1.35)]{bgv}, we choose $$f(z) = \dfrac{1}{2}\ln\left(\dfrac{z/2}{\sinh(z/2)}\right)$$
and then obtain the primitive cohomology class represented by 
$$\tracebold\left(\dfrac{1}{2}\ln\left(\dfrac{\mathbb{A}^2/2}{\sinh(\mathbb{A}^2/2)}\right)\right).$$
Immediately, the primitive A-hat genus is defined as
    $$\hat{\text{A}}(\mathbb{A},\omega) = \exp\left(\tracebold\left(\dfrac{1}{2}\ln\left(\dfrac{\mathbb{A}^2/2}{\sinh(\mathbb{A}^2/2)}\right)\right)\right).$$  
When using the power series of $\exp$, we need the product structure on $\Omega(M)\oplus\Omega(M)$, i.e., 
\begin{align*}
    (\alpha_1, \beta_1)\cdot (\alpha_2, \beta_2) \coloneqq (\alpha_1\wedge\alpha_2, \beta_1\wedge\alpha_2+(-1)^{|\alpha_1|}\alpha_1\wedge\beta_2)
\end{align*}
for any $(\alpha_1, \beta_1), (\alpha_2, \beta_2)\in\Omega(M)\oplus\Omega(M).$
\end{example}

    Like in \cite[Sections 1.5-1.6]{wittendeformationweipingzhang}, we may also replace $\mathbb{A}$ by $\dfrac{\sqrt{-1}}{2\pi}\mathbb{A}$ in the definitions of the above primitive characteristic classes. Also, besides Examples \ref{example 1} and \ref{example 2}, for other characteristic classes presented in \cite[Section 1.6]{wittendeformationweipingzhang}, we can try to construct their associated primitive versions as well.

Finally, we propose a further direction. According to \cite[Proposition 3.7]{tseng_and_zhou_symplectic_flat_connection_and_twisted_primitive2022}, if $M$ is K\"ahler equipped with the compatible metric, and if the associated Levi-Civita connection is symplectically flat, then $M$ is K\"ahler-Einstein. Equivalently, when $\mathbb{A}^2$ vanishes, $M$ is K\"ahler-Einstein. Now, if $\mathbb{A}^2$ does not vanish, but some primitive characteristic class constructed by $\mathbb{A}^2$ vanishes, we hope to obtain some conclusions related to geometric objects. 

We end this paper by summarizing the above idea into the following question: 
\begin{question}\normalfont
    For the primitive versions of Characteristic classes, what geometric information do they provide or obstruct? 
\end{question}

\bibliographystyle{abbrv}
\bibliography{mybib.bib}

\begin{thebibliography}{10}

\bibitem{atiyah2018k}
M.~F. Atiyah.
\newblock {\em K-theory}.
\newblock CRC Press, 2018.

\bibitem{bgv}
N.~Berline, E.~Getzler, and M.~Vergne.
\newblock {\em Heat Kernels and Dirac Operators}.
\newblock Springer, 2004.

\bibitem{tangtsengclausensymplecticwitten}
D.~Clausen, X.~Tang, and L.-S. Tseng.
\newblock Symplectic {M}orse theory and {W}itten deformation. {P}reprint ar{X}iv:2211.11712v4, 2022.

\bibitem{tangtsengclausenmappingcone}
D.~Clausen, X.~Tang, and L.-S. Tseng.
\newblock Mapping cone and {M}orse theory. {P}reprint ar{X}iv:2405.02272v2, 2024.

\bibitem{quillen_superconnection}
D.~Quillen.
\newblock Superconnections and the {C}hern character.
\newblock {\em Topology}, 24(1):89--95, 1985.

\bibitem{tanaka_tseng_2018}
H.~L. Tanaka and L.-S. Tseng.
\newblock Odd sphere bundles, symplectic manifolds, and their intersection theory.
\newblock {\em Cambridge Journal of Mathematics}, 6(3):213--266, 2018.

\bibitem{tty3rd}
C.-J. Tsai, L.-S. Tseng, and S.-T. Yau.
\newblock {Cohomology and Hodge theory on symplectic manifolds: III}.
\newblock {\em Journal of Differential Geometry}, 103(1):83--143, 2016.

\bibitem{tty1st}
L.-S. Tseng and S.-T. Yau.
\newblock {Cohomology and Hodge theory on symplectic manifolds: I}.
\newblock {\em Journal of Differential Geometry}, 91(3):383--416, 2012.

\bibitem{tty2nd}
L.-S. Tseng and S.-T. Yau.
\newblock {Cohomology and Hodge theory on symplectic manifolds: II}.
\newblock {\em Journal of Differential Geometry}, 91(3):417--443, 2012.

\bibitem{tseng_and_zhou_symplectic_flat_connection_and_twisted_primitive2022}
L.-S. Tseng and J.~Zhou.
\newblock Symplectic flatness and twisted primitive cohomology.
\newblock {\em The Journal of Geometric Analysis}, 32(11):282, 2022.

\bibitem{tseng_zhou_symplectic_flat_functional_characteristic_classes2022}
L.-S. Tseng and J.~Zhou.
\newblock Symplectically flat connections and their functionals. {P}reprint ar{X}iv:2210.03032v5, 2022.

\bibitem{tseng_and_zhou_2025mapping_yang_mills}
L.-S. Tseng and J.~Zhou.
\newblock Mapping cone connections and their {Yang--Mills functional}.
\newblock {\em Communications in Mathematical Physics}, 406(7):156, 2025.

\bibitem{wittendeformationweipingzhang}
W.~Zhang.
\newblock {\em Lectures on Chern-Weil Theory and Witten Deformations}.
\newblock World Scientific, 2001.

\bibitem{weipingzhangnewedition}
W.~Zhang and H.~Feng.
\newblock {\em Geometry and Analysis on Manifolds}.
\newblock Higher Education Press, {C}hinese edition, 2022.

\bibitem{symplectic_semi_char_2025}
H.~Zhuang.
\newblock Symplectic semi-characteristics. {P}reprint ar{X}iv:2505.14496v1, 2025.

\bibitem{1_filtered_semi_char_2025}
H.~Zhuang.
\newblock A vanishing property about the $1$-filtered cohomology groups of $(4n+2)$-dimensional closed symplectic manifolds. {P}reprint ar{X}iv:2510.10630v1, 2025.

\end{thebibliography}
\end{document}